\newtheorem{theorem}{Theorem}[section]
\newtheorem{cor}[theorem]{Corollary}
\newtheorem{lemma}[theorem]{Lemma}
\newcommand{\p}{\mathbb{P}}
\renewcommand{\Re}{\operatorname{Re}}
\newcommand{\meas}{\operatorname{meas}}
\begin{document}

\begin{abstract}
We prove that any non-zero complex values $z_1,\ldots,z_n$ can be approximated by the following integral shifts of the Riemann zeta-function $\zeta(s+id_1\tau),\ldots,\zeta(s+id_n\tau)$ for infinitely many $\tau$, provided $d_1,\ldots,d_n\in\mathbb{N}$ and $s$ is a fixed complex number lying in the right open half of the critical strip.
\end{abstract}

\title{Joint value-distribution of shifts of the Riemann zeta-function}

\author{\L ukasz Pa\'nkowski} 
\address{Faculty of Mathematics and Computer Science, Adam Mickiewicz University, Uniwersytetu Pozna\'nskiego 4, 61-614 Pozna\'n, Poland}
\email{lpan@amu.edu.pl}

\thanks{The author was partially supported by the grant no. 2016/23/D/ST1/01149 from the National Science Centre.}

\maketitle

\section{Introduction}

In the 80's of the last century Bagchi (see \cite{B81,B87}) observed that the classical Riemann Hypothesis is equivalent to the fact that for  every $\varepsilon>0$ and every compact set $K\subset D:=\{s\in\mathbb{C}:1/2<\Re(s)<1\}$ with connected complement we have

\begin{equation}\label{eq:SelfSimZeta}
\liminf_{T\to\infty}\frac{1}{T}\meas\left\{\tau\in[0,T]:\max_{s\in K}|\zeta(s+i\tau) - \zeta(s)|<\varepsilon\right\}>0,
\end{equation}
where $\meas$ denotes the Lebesgue measure on $\mathbb{R}$. In other words \eqref{eq:SelfSimZeta} means that the set of real~$\tau$ satisfying $\max_{s\in K}|\zeta(s+i\tau) - \zeta(s)|<\varepsilon$ has a positive lower density. Moreover, let us note that in the language of topological dynamics Inequality (\ref{eq:SelfSimZeta}) says that the Riemann Hypothesis is equivalent to the strong recurrence of the Riemann zeta-function (see~\cite{GottschalkHedlund}). 

One implication in Bagchi's observation is an immediate consequence of the so-called universality theorem due to Voronin \cite{V}, which is a generalization of the work of Bohr and his collaborators (see \cite{Bohr, BohrCourant,BohrJessen}) on denseness theorems in $\mathbb{C}$ of values of the Riemann zeta-function and  states that for any non-vanishing and continuous function $f(s)$ on $K$, analytic in the interior of $K$, and every $\varepsilon>0$, we have
\[
\liminf_{T\to\infty}\frac{1}{T}\meas\left\{\tau\in[0,T]:\max_{s\in K}|\zeta(s+i\tau) - f(s)|<\varepsilon\right\}>0.
\]
The converse implication is a combination of Rouch\'e's theorem and the well-known zero density theorem for the Riemann zeta-function.  

An interesting generalization of \eqref{eq:SelfSimZeta} was suggested by Nakamura in \cite{N09}, where he introduced the problem of finding all $d_1,d_2,\ldots,d_n$ such that for every compact set $K\subset D$ with connected complement and every $\varepsilon>0$ we have
\begin{equation}\label{eq:Self}
\liminf_{T\to\infty}\frac{1}{T}\meas\left\{\tau\in[0,T]:\max_{1\leq k\ne\ell\leq n}\max_{s\in K}|\zeta(s+id_k\tau) - \zeta(s+id_\ell\tau)|<\varepsilon\right\}>0.
\end{equation}
Notice that \eqref{eq:Self} holds for $n=2$ and $d_1=0$, $d_2\ne 0$ if and only if the Riemann Hypothesis is true. Interestingly it turns out (see~\cite{N09,NP,PanWuerzburg,P2019}) that for $n=2$ the case $d_1=0$, $d_2\ne 0$ is the most difficult one as it was shown that \eqref{eq:Self} holds for $n=2$ and every non-zero real $d_1,d_2$. A common strategy of showing \eqref{eq:Self} is to prove a joint version of universality for $\zeta(s+id_1\tau),\ldots,\zeta(s+id_n\tau)$, namely to prove that for any non-vanishing and continuous functions $f_1(s),\ldots,f_n(s)$ on $K$, analytic in the interior of $K$, and every $\varepsilon>0$ we have
\begin{equation}\label{eq:jointSelf}
\liminf_{T\to\infty}\frac{1}{T}\meas\left\{\tau\in[0,T]:\max_{1\leq k\leq n}\max_{s\in K}|\zeta(s+id_k\tau) - f_k(s)|<\varepsilon\right\}>0.
\end{equation}
Obviously, the truth of \eqref{eq:jointSelf} implies \eqref{eq:Self}. The problem of finding all parameters $d_1,d_2$ satisfying \eqref{eq:jointSelf} for $n=2$ was also completely solved in the aforementioned series of papers by Nakamura and the author, where it was proved that \eqref{eq:jointSelf} holds for $n=2$ and any real $d_1,d_2$ with $0\ne d_1,d_2$ and $d_1\ne \pm d_2$. Note that it is indeed a complete answer for $n=2$ as one cannot expect that \eqref{eq:jointSelf} holds for $d_1=\pm d_2$.

Unfortunately, the situation for $n\geq 3$ is much more complicated and so far we know only (see \cite{N09}) that \eqref{eq:jointSelf} holds if $d_1=1,d_2,\ldots,d_n$ are algebraic real numbers linearly independent over $\mathbb{Q}$, which is an easier case since for such $d_k$'s, in view of Baker's theorem in transcendental number theory, the set $I(d_1,\ldots,d_n):=\{d_k\log p: p\text{ is prime, }1\leq k\leq n\}$ is linearly independent over $\mathbb{Q}$ and one can follow straightforwardly the proof of Voronin's theorem in order to get \eqref{eq:jointSelf}. However, if the set $I(d_1,\ldots,d_n)$ is not linearly independent over $\mathbb{Q}$, we need a new approach or at least a significant modification of known proofs of universality, since linear independence of the set $I(d_1,\ldots,d_n)$ is crucial in several steps of these proofs. 

In the paper we focus on considering the following weaker version of \eqref{eq:jointSelf} in the flavor of denseness theorems due to Bohr, where $K$ is a singleton and $d_1,\ldots,d_n$ are positive integers. Note that in this case the set $I(d_1,\ldots,d_n)$ is far from being linearly independent over the field of rational numbers.
\begin{theorem}\label{thm:main}
	Let $d_1<d_2<\cdots<d_n\in\mathbb{N}$ and $s=\sigma+it$ with $\sigma\in(1/2,1]$ and $t>0$.  Then, for every $z_1,z_2,\ldots,z_n\in\mathbb{C}$ and every $\varepsilon>0$ the set of all positive real $\tau$ satisfying 
	\[
	\max_{1\leq k\leq n}|\log\zeta(s+id_k\tau)-z_k|<\varepsilon
	\]
	has a positive lower density.
\end{theorem}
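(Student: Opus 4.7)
My plan is to pass from the $\tau$-averages of the vector $(\log\zeta(s+id_k\tau))_{k=1}^n$ to expectations with respect to a Haar-distributed element of a finite-dimensional torus, and then to show that the resulting random vector has full support in $\mathbb{C}^n$. First I would approximate each $\log\zeta(s+id_k\tau)$ by a Dirichlet polynomial: for $\sigma\in(1/2,1]$, classical second-moment estimates yield
\[
\frac{1}{T}\int_0^T\Bigl|\log\zeta(s+id_k\tau)-\sum_{m\ge 1}\sum_{p\le X}\frac{1}{m\,p^{m(s+id_k\tau)}}\Bigr|^2\,d\tau\xrightarrow[X\to\infty]{}0
\]
uniformly in large $T$. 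Since $\{\log p\}_p$ is $\mathbb{Q}$-linearly independent, Kronecker--Weyl equidistribution of $\tau\mapsto(p^{-i\tau})_{p\le X}$ on $\mathbb{T}^{\pi(X)}$ then lets me replace the $\tau$-averages by expectations with respect to independent uniform $\omega(p)\in\mathbb{T}$. The theorem reduces to showing that, for some sufficiently large $X$, every $z\in\mathbb{C}^n$ and every $\delta>0$,
\[
\p\Bigl(\Bigl\|\Bigl(\sum_{m\ge 1}\sum_{p\le X}\frac{\omega(p)^{md_k}}{m\,p^{ms}}\Bigr)_{k=1}^n-z\Bigr\|_\infty<\delta\Bigr)>0.
\]

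By continuity this amounts to exhibiting a single $\omega$ realising the displayed inequality, i.e.\ to proving that the random vector above has support $\mathbb{C}^n$. I would prove this by a block-by-block construction. Partition the primes into consecutive blocks $B_j$ of $2n$ elements and, for each block, consider the map
\[
\Phi_{B_j}:\mathbb{T}^{2n}\longrightarrow\mathbb{C}^n,\qquad (\omega(p))_{p\in B_j}\longmapsto\Bigl(\sum_{p\in B_j}p^{-s}\omega(p)^{d_k}\Bigr)_{k=1}^n.
\]
At a generic point of $\mathbb{T}^{2n}$ the real Jacobian of $\Phi_{B_j}$ is non-degenerate: for $2n$ generic angles $\theta\in[0,2\pi)$ the complex vectors $(d_k e^{id_k\theta})_{k=1}^n\in\mathbb{C}^n\cong\mathbb{R}^{2n}$ are $\mathbb{R}$-linearly independent, a generalised-Vandermonde fact that relies on the distinctness $d_1<\cdots<d_n$. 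The inverse function theorem then implies that $\Phi_{B_j}(\mathbb{T}^{2n})$ contains an open ball whose radius is comparable to $\sum_{p\in B_j}p^{-\sigma}$. Because $\sum_p p^{-\sigma}=\infty$ for $\sigma\le 1$, a telescoping iteration through successive blocks reduces the distance to $z$ to any prescribed tolerance, while the higher prime-power terms ($m\ge 2$) converge absolutely and uniformly in $\omega$ and are absorbed as a bounded analytic perturbation.

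The principal difficulty is the support step. Two delicate points must be addressed: a \emph{quantitative} lower bound on the Jacobian determinant of $\Phi_{B_j}$ at a suitably chosen point, so that each block's ``reach'' is genuinely of order $\sum_{p\in B_j}p^{-\sigma}$ rather than merely generically positive; and the iterative block-by-block reduction, which must be carried out despite the coupling between coordinates forced by the single-variable substitution $\omega(p)\mapsto\omega(p)^{d_k}$ at every prime, as well as the marginal divergence of $\sum_p p^{-\sigma}$ in the boundary case $\sigma=1$. The integrality hypothesis $d_k\in\mathbb{N}$ enters through the analyticity of $\omega(p)\mapsto\omega(p)^{d_k}$, which is what makes the Jacobian computation and the Kronecker--Weyl passage clean.
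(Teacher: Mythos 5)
Your overall architecture (mean-square approximation by a Dirichlet polynomial, Kronecker--Weyl equidistribution over the torus indexed by $p\le X$ with the coupling $\omega(p)\mapsto\omega(p)^{d_k}$ coming from $d_k\in\mathbb{N}$, then a support theorem for the limiting random vector) is a legitimate Bagchi-style alternative to the paper's more hands-on route, which instead works directly on the set $A_d(T)=\{\tau: \|-\tau\tfrac{\log p}{2\pi}-\theta_p\|\le d,\ p\in M\}$ and controls the intermediate primes there via Montgomery--Vaughan (Lemmas \ref{lem:approxFiniteSum} and \ref{lem:middle}). Those reduction steps of yours are fine in outline. The problem is that everything has been shifted onto the support (denseness) statement, which is exactly the heart of the paper (Lemma \ref{lem:denseness}, proved via Pechersky's rearrangement theorem in Hilbert space, Chen's effective Kronecker theorem, and Wilder's zero-counting theorem), and your block-by-block construction does not prove it.

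Two concrete gaps. First, generic non-degeneracy of the Jacobian of $\Phi_{B_j}$ is not enough: the coefficients are $p^{-s}=p^{-\sigma}e^{-it\log p}$, so their arguments vary from block to block in an essentially arbitrary way, and to claim that $\Phi_{B_j}(\mathbb{T}^{2n})$ contains a ball of radius comparable to $\sum_{p\in B_j}p^{-\sigma}$ \emph{uniformly in $j$} you need a quantitative lower bound on the Jacobian at a chosen point, uniform over all configurations of these arguments (together with second-derivative control); this is plausible by a compactness argument but is precisely the kind of work the paper's Lemma \ref{lem:denseness} replaces by Chen's theorem and Wilder's theorem, and you have not supplied it. Second, and more seriously, the inverse function theorem gives a ball centered at $\Phi_{B_j}(\theta^\ast)$, a point of size up to $C\sum_{p\in B_j}p^{-\sigma}$ with $C$ possibly much larger than the small constant $c$ in the radius, and there is no symmetry forcing the image to contain a neighborhood of the origin. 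Hence the ``telescoping iteration'' is not justified: to steer toward an arbitrary $z\in\mathbb{C}^n$ you must be able to take a step of size $\asymp\sum_{p\in B_j}p^{-\sigma}$ in an arbitrary prescribed direction, i.e.\ you need each block's reachable set to contain a ball centered (essentially) at $0$, or some control/cancellation of the centers' drift; otherwise the drift of the centers can outpace the accumulated radii even though $\sum_p p^{-\sigma}$ diverges. This steering problem is exactly the difficulty created by the coupling of all $n$ coordinates through the single angle $\theta_p$, and it is what the paper resolves by reducing, via Pechersky's theorem, to producing rearrangements of $\sum_p\Re\langle u_p,e\rangle$ diverging to $\pm\infty$ for every unit vector $e$, using Chen's effective Kronecker theorem to find integers $\tau_l$ with all $\|d_k\tau_l\|$ small and Wilder's theorem to find a residue class where $f(x)=\sum_k\overline{a_k}e(d_kx)$ is bounded away from zero. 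Until your two ``delicate points'' are actually carried out, the proposal is a plan rather than a proof.
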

Obviously, the theorem easily implies that \eqref{eq:jointSelf} holds for $K=\{s\}$ and $f_k(s)=z_k\ne 0$. 
Moreover, it is worth mentioning that the proof of Theorem \ref{thm:main} presented in the paper might be simplified in several places under the assumption that $d_1,\ldots,d_n$ are integers. However we decided to make all steps in the proof as general as possible since it might be interesting for the reader to see that in fact the proof of the above theorem is valid for arbitrary positive real $d_1,\ldots,d_n$ except only one step where we use the implication 
\[
-\tau\frac{\log p}{2\pi}\text{ is close to }\theta_p \Longrightarrow -\tau d_k\frac{\log p}{2\pi}\text{ is close to }d_k\theta_p,
\]
which is trivially true for $d_k\in\mathbb{Z}$, but it does not necessary hold if $d_k\notin\mathbb{Z}$.

\section{Denseness lemma}

In this section we prove the following denseness lemma, which holds for all positive real $d_1,\ldots,d_n$.

\begin{lemma}\label{lem:denseness}
	Let $d_1<d_2<\cdots<d_n\in\mathbb{R}_+$ and $s=\sigma+it$ with $\sigma\in(1/2,1]$ and $t>0$.  Then, for every $z_1,z_2,\ldots,z_n\in\mathbb{C}$ and every $\varepsilon>0$, $y>0$, there exist a finite set of prime numbers $M\supset\{p: p<y\}$ and a sequence $\overline{\theta} = (\theta_p)_{p\in M}$ such that
	\[
	\max_{1\leq k\leq n}\left|\sum_{p\in M}\frac{e(d_k\theta_p)}{p^s} - z_k\right|<\varepsilon,
	\]
	where as usual $e(t)=e^{2\pi i t}$.
\end{lemma}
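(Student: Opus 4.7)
The plan is to set $\theta_p = 0$ for the primes $p < y$ (which are forced into $M$), so that they contribute the fixed vector $v_0 := \sigma_0\,(1,\ldots,1) \in \mathbb{C}^n$ with $\sigma_0 := \sum_{p<y} p^{-s}$; it then remains to pick further primes $p_1, p_2, \ldots \ge y$ (pairwise distinct) together with phases $\theta_1, \theta_2, \ldots \in [0,1)$ by a greedy procedure that shrinks the residual
\[
r_j := (z_1, \ldots, z_n) - v_0 - \sum_{i < j} a_{p_i}(\theta_i), \qquad a_p(\theta) := \left(\frac{e(d_1\theta)}{p^s}, \ldots, \frac{e(d_n\theta)}{p^s}\right),
\]
in $\mathbb{C}^n$; once $\|r_j\|_2 < \varepsilon$, truncating gives a sum that approximates $z$ within $\varepsilon$ in every coordinate.

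At step $j$ I pick a fresh prime $p_j \ge y$ (subject to a size condition given below) and choose $\theta_j$ maximising $\Re \langle r_j, a_{p_j}(\theta_j) \rangle$. The key input is the uniform lower bound
\[
\max_{\theta \in [0,1)} \Re\langle u, a_p(\theta)\rangle \ge \frac{c}{p^\sigma} \qquad \text{for every } u \in \mathbb{C}^n \text{ with } \|u\| = 1 \text{ and every prime } p \ge y,
\]
with $c > 0$ depending only on $d_1, \ldots, d_n$. To see it, write $\langle u, a_p(\theta)\rangle = p^{-s} \sum_k \bar u_k e(d_k\theta)$; absorbing the unimodular phase $p^{-s}/|p^{-s}|$ into $u$ (a unitary change of variable on the sphere) reduces the claim to $c := \inf_{\|v\|=1} \max_\theta \Re\bigl(\sum_k \bar v_k e(d_k\theta)\bigr) > 0$. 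The trigonometric polynomial $\sum_k \bar v_k e(d_k\theta)$ is non-trivial (by linear independence of $\{e(d_k\theta)\}_k$, which follows from the distinctness of the $d_k$) and mean-zero on $[0,1)$ (since each $d_k \ne 0$), hence takes strictly positive values somewhere; continuity of $v \mapsto \max_\theta \Re(\cdot)$ together with compactness of the unit sphere in $\mathbb{C}^n$ then furnishes $c > 0$.

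Expanding $\|r_{j+1}\|_2^2 = \|r_j\|_2^2 - 2\Re\langle r_j, a_{p_j}(\theta_j)\rangle + \|a_{p_j}(\theta_j)\|_2^2$ with $\|a_{p_j}(\theta_j)\|_2^2 = n/p_j^{2\sigma}$, and imposing the size condition $p_j^\sigma \ge n/(c\|r_j\|_2)$, yields
\[
\|r_{j+1}\|_2^2 \le \|r_j\|_2^2 - \frac{c\|r_j\|_2}{p_j^\sigma}.
\]
As long as $\|r_j\|_2 \ge \varepsilon$ the size condition reduces to $p_j^\sigma \ge n/(c\varepsilon)$, which is satisfied by all but finitely many primes; the divergence $\sum_{p \ge y} p^{-\sigma} = \infty$ (valid because $\sigma \le 1$) then forces the cumulative decrease over enough steps to exceed $\|r_0\|_2^2$---a contradiction unless $\|r_j\|_2 < \varepsilon$ is eventually reached. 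The main obstacle is to obtain positivity of $\max_\theta \Re\langle u, a_p(\theta)\rangle$ rather than merely a bound on $\max |\Re\langle u, a_p(\theta)\rangle|$: the latter would follow from the trivial $L^2$-computation $\|\Re\langle u, a_p(\cdot)\rangle\|_{L^2}^2 = 1/(2p^{2\sigma})$, but greedy progress requires motion in the direction of $r_j$, not against it, and it is precisely the mean-zero compactness argument sketched above that bridges this gap.
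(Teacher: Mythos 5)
Your greedy construction is essentially sound and takes a genuinely different route from the paper. The paper fixes the phases in advance ($\theta_{p_m}=m/L$) and invokes Pechersky's rearrangement theorem in $\mathbb{C}^n$; verifying its hypothesis --- that $\sum_m(u_m|e)$ admits a divergent rearrangement for \emph{every} unit vector $e$ --- is what forces the heavy machinery there: Chen's effective Kronecker theorem to produce near-periods of $f(x)=\sum_k\overline{a_k}e(d_kx)$, Wilder's theorem to locate a point $m_0/L$ with $f(m_0/L)\ne0$, and the prime number theorem to guarantee enough primes whose rotation $e^{-it\log p}$ aligns with that value. You instead choose $\theta_p$ \emph{after} the prime is picked, so the factor $e^{-it\log p}$ is simply absorbed into the direction vector, and the whole difficulty collapses to your compactness bound $c>0$ plus the divergence of $\sum_p p^{-\sigma}$ for $\sigma\le1$. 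This is more elementary and entirely sufficient, since the lemma asks only for approximation (Pechersky yields an exact rearranged representation, which is more than is needed); your recursion $\|r_{j+1}\|_2^2\le\|r_j\|_2^2-c\|r_j\|_2p_j^{-\sigma}$ under the size condition is checked correctly, and $\|r_j\|_2<\varepsilon$ does give the coordinatewise bound.

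One step, however, is wrong in the generality in which the lemma is stated. The lemma allows arbitrary $d_1<\cdots<d_n\in\mathbb{R}_+$, and for non-integer $d_k$ the function $\theta\mapsto\sum_k\overline{v_k}e(d_k\theta)$ is neither $1$-periodic nor mean-zero on $[0,1)$; your constant can then vanish. Concretely, for $n=1$, $d_1=1/2$, $v_1=-i$ one has $\Re\bigl(\overline{v_1}e(\theta/2)\bigr)=-\sin(\pi\theta)\le0$ on $[0,1)$, so $\max_{\theta\in[0,1)}\Re\langle\cdot\rangle=0$ and the lower bound $c/p^{\sigma}$ with $c>0$ fails. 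The repair is immediate and stays within your scheme: since the $\theta_p$ in the lemma are arbitrary reals, take the supremum over $\theta\in\mathbb{R}$ and replace ``mean-zero on $[0,1)$'' by the mean value over long intervals (Bohr mean), which vanishes because every $d_k>0$; as $\Re\sum_k\overline{v_k}e(d_k\theta)$ is a non-trivial real almost periodic function (the frequencies $\pm d_k$ are distinct and non-zero) with zero mean, it takes positive values, and $v\mapsto\sup_{\theta\in\mathbb{R}}\Re\sum_k\overline{v_k}e(d_k\theta)$ is continuous on the compact unit sphere, so the infimum is positive. Since this supremum need not be attained, pick $\theta_j$ achieving at least half of it, which only halves $c$. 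For the integer $d_k$ of Theorem 1.1 your argument is fine as written.
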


The proof of the lemma essentially based on the following Pechersky's \cite{Pe} generalization of the Riemann rearrangement lemma to Hilbert spaces.

\begin{lemma}[{\cite{Pe}}]\label{lem:Pech}
Let $H$ be a real Hilbert space with the inner product $(\cdot|\cdot)$ and a sequence $(u_n)_{n\in\mathbb{N}}\subset H$ be such that $\sum_{n=1}^\infty\Vert u_n\Vert_H^2<\infty$. Moreover, suppose that for every $e\in H$ with $\Vert e\Vert =1$ the series $\sum_{n=1}^\infty \langle u_n,e\rangle$ are conditionally convergent after suitable permutation of terms. Then, for every $v\in H$ there exists a permutation $(n_k)$ such that $\sum_{k=1}^\infty u_{n_k} = v$.
\end{lemma}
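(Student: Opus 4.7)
We view the target space as the real Hilbert space $H=\mathbb{C}^n$ with inner product $\langle u,v\rangle=\Re\sum_{k=1}^n u_k\overline{v_k}$, and set $v_p(\theta):=(e(d_k\theta)/p^s)_{k=1}^n\in H$, so that $\|v_p(\theta)\|^2=np^{-2\sigma}$. Since the coordinate-wise maximum is dominated by the $H$-norm, it suffices to approximate the target $(z_1,\ldots,z_n)$ in $H$. Fixing arbitrary angles for the primes below~$y$ (for instance $\theta_p=0$) and absorbing their contribution into the target, the problem reduces to: given $z'\in H$ and $\varepsilon>0$, find finitely many primes $q\ge y$ together with angles $\theta_q$ such that $\|\sum_q v_q(\theta_q)-z'\|<\varepsilon$.

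\textbf{Pechersky input.} Enumerate the primes $\ge y$ as $q_1<q_2<\cdots$. The plan is to realize the desired finite sum as a truncation of the rearranged series produced by Lemma~\ref{lem:Pech}, applied to a carefully chosen sequence $u_j=v_{q_j}(\theta_j)$. Square-summability $\sum_j\|u_j\|^2=n\sum_j q_j^{-2\sigma}<\infty$ is immediate from $\sigma>1/2$. The nontrivial hypothesis is $\sum_j|\langle u_j,e\rangle|=\infty$ for every unit $e\in H$; combined with $\langle u_j,e\rangle\to 0$ (which follows from $\|u_j\|\to 0$), this gives the rearrangement condition of Lemma~\ref{lem:Pech}. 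To secure it, we choose $\alpha_1,\ldots,\alpha_n\in\mathbb{R}$ (for instance $\alpha_l=l\beta$ with a sufficiently generic small $\beta$) so that the $n\times n$ matrix $M=\bigl(e(d_k\alpha_l)\bigr)_{l,k}$ is invertible over $\mathbb{C}$; this is possible because $e(d_1\beta),\ldots,e(d_n\beta)$ are pairwise distinct for generic $\beta$ (since the $d_k$ are distinct), making $M$ a disguised Vandermonde matrix. We partition the index set into residue classes $C_l=\{j:j\equiv l\pmod n\}$ and declare $\theta_j:=\alpha_l$ when $j\in C_l$.

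\textbf{Verifying the rearrangement hypothesis and concluding.} For any unit $e\in H$, invertibility of $M$ forces the vector $\bigl(g_l(e)\bigr)_l$, where $g_l(e):=\sum_k e(d_k\alpha_l)\overline{e_k}$, to be nonzero, so some $g_{l_0}(e)=|c|e^{i\psi}$ with $c\ne 0$. For $j\in C_{l_0}$ one has $|\langle u_j,e\rangle|=|c|\,q_j^{-\sigma}|\cos(\psi-t\log q_j)|$, and the pointwise bound $|\cos x|\ge\cos^2 x=\tfrac12(1+\cos 2x)$ reduces the divergence of $\sum_{j\in C_{l_0}}|\langle u_j,e\rangle|$ to the divergence of $\tfrac12\sum_{q\in C_{l_0}}q^{-\sigma}$ (clear from $\sigma\le 1$ and the positive density of $C_{l_0}$ inside the primes) plus an oscillatory correction $\tfrac12\Re\bigl[e^{2i\psi}\sum_{q\in C_{l_0}}q^{-\sigma-2it}\bigr]$; by Abel summation and the prime number theorem (with $t>0$), the latter's partial sums are strictly smaller in absolute value than those of $\sum q^{-\sigma}$, so the combined sum diverges to $+\infty$. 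Lemma~\ref{lem:Pech} now yields a permutation $(n_k)$ with $\sum_k u_{n_k}=z'$, and truncating at $N$ large enough that $\|\sum_{k=1}^N u_{n_k}-z'\|<\varepsilon$, together with $M'=\{q_{n_1},\ldots,q_{n_N}\}$ and the inherited angles, completes the proof after reattaching the primes below~$y$. The main obstacle is the divergence estimate in the third paragraph: one must rule out any cancellation of the main term $\sum q^{-\sigma}$ by the oscillatory factor $p^{-2it}$ along primes, a point that rests on a standard PNT-plus-partial-summation input rather than any deep equidistribution result.
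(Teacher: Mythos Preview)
Your proposal does not address the stated lemma. Lemma~\ref{lem:Pech} is Pechersky's general rearrangement theorem for sequences in an abstract real Hilbert space, which the paper quotes from \cite{Pe} without proof; there is no argument in the paper to compare against. What you have written is instead an outline of a proof of Lemma~\ref{lem:denseness}, the denseness lemma, in which Pechersky's theorem is \emph{applied} as a black box. As a proof of the labeled statement it is therefore off target.

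Viewed instead as an alternative proof of Lemma~\ref{lem:denseness}, your route is genuinely different from the paper's and largely sound. The paper takes $\theta_{p_m}=m/L$, uses Wilder's theorem on zeros of exponential sums to locate a residue where $f(x)=\sum_k\overline{a_k}e(d_kx)$ does not vanish, then invokes Chen's effective Kronecker theorem (Corollary~\ref{cor:fromChen}) to propagate this along a sparse set of indices, and finally restricts to primes in logarithmic windows to force the sign of $\Re(e^{-it\log p}f(\theta_p))$. You bypass both Wilder and Chen: choosing $n$ angles so that $(e(d_k\alpha_l))_{k,l}$ is an invertible Vandermonde matrix and distributing them cyclically over prime indices is a cleaner way to guarantee that $g_l(e)\ne 0$ for some $l$, and the inequality $|\cos x|\ge\tfrac12(1+\cos 2x)$ together with Abel summation replaces the logarithmic-window argument. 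Two points need tightening, however. First, $\sum_j|\langle u_j,e\rangle|=\infty$ and $\langle u_j,e\rangle\to 0$ are not by themselves the hypothesis of Lemma~\ref{lem:Pech}; you must also check that both the positive and the negative parts of the real series $\sum_j\langle u_j,e\rangle$ diverge (equivalently, that it can be rearranged to $+\infty$ and to $-\infty$), which is easy here since $\cos(\psi-t\log q_j)$ changes sign with positive frequency along $C_{l_0}$, but you do not say so. Second, ``strictly smaller in absolute value'' is not enough to conclude divergence of the combined sum: you need the ratio of the oscillatory partial sum to the main term to stay bounded away from~$1$. For $\tfrac12<\sigma<1$ this follows from the PNT asymptotic, which gives the limiting ratio $(1-\sigma)/\sqrt{(1-\sigma)^2+4t^2}<1$, and for $\sigma=1$ the oscillatory sum $\sum_{q\in C_{l_0}}q^{-1-2it}$ is bounded while the main term diverges; your write-up should make this explicit.
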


Moreover, we use the following effective version of Kronecker approximation theorem due to Chen. Henceforth, $\|\cdot \|$ will denote the distance to the nearest integer. 
\begin{lemma}[Chen, \cite{chen}]  \label{lem:chen}
	Let $\lambda_1, \hdots, \lambda_n$ and $\alpha_1, \hdots, \alpha_n$ be real numbers and assume the following property:  
	for all integers $u_1, \hdots, u_n$ with $|u_j|\le M$, the assertion 
	\[
	u_1\lambda_1+ \cdots + u_n\lambda_n \in\mathbb{Z}
	\]
	implies that $u_1\alpha_1+ \cdots + u_n\alpha_n$ is an integer. Then for all positive real numbers $\delta_1, \hdots, \delta_n$ and for any integers $T_1<T_2$, we have 
	\[
	\inf_{t \in [T_1, T_2]\cap\mathbb{Z}} \sum_{j=1}^n \delta_j \| \lambda_j t -\alpha_j\|^2 \le \frac{ \Delta}{4} \sin^2\left(\frac{\pi}{2(M+1)}\right) + \frac{\Delta M^n}{8 (T_2-T_1)\Lambda  }, 
	\] 
	where 
	\[
	\Delta = \sum_{j=1}^n \delta_j 
	\]
	and 
	\[
	\Lambda = \min\left\lbrace  \left| u_1\lambda_1+ \cdots + u_n\lambda_n  \right|  : u_j \in \mathbb{Z}, |u_j| \le M, \sum_{j=1}^n \lambda_ju_j \not\in\mathbb{Z}
	\right\rbrace .
	\]
\end{lemma}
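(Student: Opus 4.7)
The strategy is to bound the infimum by an appropriate weighted average of $\phi(t):=\sum_{j=1}^{n}\delta_j\|\lambda_j t-\alpha_j\|^2$ over $t\in[T_1,T_2]\cap\mathbb{Z}$, with the weight designed so that the hypothesis on the $\lambda_j$'s controls the ``resonant'' Fourier frequencies and the gap $\Lambda$ controls the non-resonant ones.

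First I would introduce the Fej\'er-type weight $w(t)=\prod_{j=1}^{n}F_M(\lambda_j t-\alpha_j)$, where $F_M(x)=(M+1)^{-1}(\sin(\pi(M+1)x)/\sin(\pi x))^2=\sum_{|k|\le M}(1-|k|/(M+1))e^{2\pi i kx}$ is the standard Fej\'er kernel; this is non-negative, peaks at $x\in\mathbb{Z}$, and decays in between. Expanding $w(t)=\sum_{u}c(u)e^{2\pi i(t\Lambda(u)-A(u))}$ with $u=(u_1,\ldots,u_n)\in[-M,M]^n$, $c(u)=\prod_j(1-|u_j|/(M+1))\ge 0$, $\Lambda(u)=\sum_j u_j\lambda_j$ and $A(u)=\sum_j u_j\alpha_j$, I would sum over $t\in[T_1,T_2]\cap\mathbb{Z}$ and separate the \emph{resonant} $u$ (those with $\Lambda(u)\in\mathbb{Z}$) from the rest. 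For resonant $u$ the hypothesis forces $A(u)\in\mathbb{Z}$, so $e^{-2\pi iA(u)}=1$ and the inner geometric sum equals $T_2-T_1+1$; the $u=0$ term alone contributes a lower bound $\sum_t w(t)\ge (T_2-T_1+1)-(\text{error})$. For non-resonant $u$ the geometric sum is bounded in modulus by $|\sin(\pi\Lambda(u))|^{-1}\le 1/(2\Lambda)$, and there are at most $(2M+1)^n$ such $u$, producing a total error of order $M^n/\Lambda$.

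The second step is to treat $\sum_t w(t)\phi(t)$ by the same Fourier machinery, after installing a one-variable pointwise inequality of the shape
\[
\|y\|^2\,F_M(y)\le \tfrac{1}{4}\sin^2\!\left(\tfrac{\pi}{2(M+1)}\right)F_M(y)+R_M(y),
\]
valid for all $y\in\mathbb{R}$, where $R_M\ge 0$ is a non-negative trigonometric polynomial of degree $\le M$ with controlled Fourier mass. This inequality formalizes the intuition that, wherever $F_M(y)$ is sizeable, $\|y\|^2$ is already at most $\tfrac{1}{4}\sin^2(\pi/(2(M+1)))$. Multiplying by $\prod_{i\neq k}F_M(\lambda_i t-\alpha_i)$, summing over $k$ with weight $\delta_k$, and then summing over $t$, the resonant modes deliver a main contribution $\tfrac{\Delta}{4}\sin^2(\pi/(2(M+1)))\sum_t w(t)$, while the non-resonant modes contribute an error $O(\Delta M^n/\Lambda)$. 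Dividing by $\sum_t w(t)$ and invoking the elementary bound $\inf_t\phi(t)\le(\sum_t w(t)\phi(t))/(\sum_t w(t))$ yields the asserted estimate once the precise constants $\tfrac14$ and $\tfrac18$ are tracked.

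The main technical hurdle is securing the one-variable pointwise inequality displayed above with the exact constant $\tfrac14\sin^2(\pi/(2(M+1)))$; this essentially amounts to identifying an extremal Fej\'er-type polynomial adapted to the squared sawtooth $\|y\|^2$, capturing the sharp $(M+1)^{-2}$ decay rate. Once this ingredient is in hand, the remaining steps --- the Fourier expansions of $w$ and $w\phi$, the cancellation of phases on resonant modes via the hypothesis on $(\lambda_j,\alpha_j)$, and the $1/(2\Lambda)$ geometric-sum bound on non-resonant modes --- are routine and combine to yield the stated error term of order $\Delta M^n/((T_2-T_1)\Lambda)$.
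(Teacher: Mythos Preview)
The paper does not supply a proof of this lemma at all; it is quoted verbatim as Chen's result \cite{chen} and used as a black box (only Corollary~\ref{cor:fromChen} is derived from it). So there is nothing in the paper to compare your argument against.

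That said, your outline is in the right spirit for how such effective Kronecker-type bounds are obtained: one builds a non-negative weight out of Fej\'er kernels, expands in Fourier modes, uses the hypothesis to kill phases on the resonant integer combinations $u$ with $\sum u_j\lambda_j\in\mathbb{Z}$, and bounds the non-resonant geometric sums via the spectral gap $\Lambda$. The one place where your sketch is genuinely incomplete is the displayed pointwise inequality
\[
\|y\|^2\,F_M(y)\le \tfrac{1}{4}\sin^2\!\Bigl(\tfrac{\pi}{2(M+1)}\Bigr)F_M(y)+R_M(y)
\]
with a non-negative trigonometric polynomial $R_M$ of degree $\le M$ and controlled $L^1$ mass. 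You correctly flag this as the crux, but you do not indicate how to produce such an $R_M$ or why the constant $\tfrac14\sin^2(\pi/(2(M+1)))$ is the right one; without an explicit construction (or at least a reference to the extremal polynomial behind it) the argument does not close, and in particular the sharp constants $\tfrac14$ and $\tfrac18$ in the statement remain unexplained. If you intend to include a self-contained proof rather than cite Chen, this step needs to be filled in.
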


\begin{cor}\label{cor:fromChen}
Let $\omega\geq 1$, $n\in\mathbb{N}$ and $d_1,d_2,\ldots,d_n\in\mathbb{R}$. Then there is an integer $T:=T(d_1,d_2,\ldots,d_n,n,\omega)>0$ such that for every $a\in\mathbb{Z}$ there exists an integer $h\in[a,a+T]$ satisfying
\[
\sum_{k=1}^n\|hd_k\|^2\leq \frac{n}{\omega}.
\]
\end{cor}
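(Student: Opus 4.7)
The plan is to invoke Lemma \ref{lem:chen} with $\lambda_j = d_j$, $\alpha_j = 0$ and $\delta_j = 1$ for all $1 \leq j \leq n$. The choice $\alpha_j \equiv 0$ makes the arithmetic hypothesis of Chen's lemma trivially satisfied (any integer linear combination of zeros is an integer), so no Diophantine condition on the $d_j$ is needed. Substituting these values, with $\Delta = n$, the lemma gives
\[
\inf_{t \in [T_1, T_2] \cap \mathbb{Z}} \sum_{j=1}^n \|d_j t\|^2 \leq \frac{n}{4}\sin^2\!\left(\frac{\pi}{2(M+1)}\right) + \frac{n M^n}{8(T_2-T_1)\Lambda},
\]
where $\Lambda > 0$ is finite (or vacuously $+\infty$ if the defining set is empty) and depends only on $d_1,\ldots,d_n$ and $M$, since it is the minimum over a finite set of positive reals.

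Next I would tune $M$ and $T := T_2 - T_1$ so that each of the two terms on the right-hand side is at most $n/(2\omega)$. Using $\sin x \leq x$, the first term is bounded by $n\pi^2/(16(M+1)^2)$, so it suffices to choose any integer $M$ of order $\sqrt{\omega}$, for instance any $M$ satisfying $M+1 \geq \pi\sqrt{\omega}/(2\sqrt{2})$. With such an $M$ fixed, the constant $\Lambda$ is determined by $d_1,\ldots,d_n$ and $M$ alone, and I would set
\[
T := 1 + \left\lceil \frac{\omega M^n}{4\Lambda}\right\rceil,
\]
which ensures the second term is at most $n/(2\omega)$ as well. Crucially, $T$ depends only on $d_1,\ldots,d_n, n$ and $\omega$, not on the shift $a$.

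Finally, given an arbitrary $a \in \mathbb{Z}$, I would apply Lemma \ref{lem:chen} with $T_1 = a$ and $T_2 = a + T$ to extract an integer $h \in [a, a+T]$ with $\sum_{k=1}^n \|h d_k\|^2 \leq n/\omega$. There is no genuine obstacle here: the corollary is essentially a repackaging of Chen's estimate into a form that is uniform in the starting point $a$. The only mild point to check is that the finite set defining $\Lambda$ indeed has a strictly positive minimum (or is empty, in which case the second term disappears and the first term alone already suffices).
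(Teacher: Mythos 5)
Your proposal is correct and takes essentially the same route as the paper: apply Chen's lemma with $\lambda_j=d_j$, $\alpha_j=0$, $\delta_j=1$, choose $M$ in terms of $\omega$ to make the sine term small, and then take $T$ large (depending only on $d_1,\ldots,d_n$, $n$, $\omega$ through $M$ and $\Lambda$) to control the second term. The only differences are cosmetic: the paper takes $M=\pi\omega/4-1$ rather than $M\asymp\sqrt{\omega}$, and your explicit remark about the degenerate case where the set defining $\Lambda$ is empty is a harmless extra check the paper leaves implicit.
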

\begin{proof}
Applying Chen's lemma for $T_1=a$, $T_2=a+T$, $(\lambda_1,\ldots,\lambda_n)=(d_1,d_2,\ldots,d_n)$, $\alpha_1=\ldots=\alpha_n=0$, $\delta_1=\ldots=\delta_n=1$ and $M:=\pi\omega/4-1$ gives that there is $h\in\mathbb{Z}$, $h\in[a,a+T]$ such that
\[
\sum_{k=1}^n\|hd_k\|^2\leq \frac{n}{4}\left(\sin^2\left(\frac{\pi}{2(M+1)}\right)+\frac{M^n}{2\Lambda T}\right).
\]
Obviously, we have $\sin^2\left(\frac{\pi}{2(M+1)}\right)\leq \frac{2}{\omega}$. Moreover, since $M$ depends only on $\omega$ and $\Lambda$ depends only on $M$ and $d_1,\ldots,d_n$, there is a sufficiently large $T$ such that $\frac{M^n}{2\Lambda T}\leq \frac{2}{\omega}$, which completes the proof.
\end{proof}

The need for the application of Lemma \ref{lem:Pech} forces us to consider the distribution of zeros of functions being linear combinations of exponential functions, which is well understood by the following result due to Wilder.
\begin{lemma}[Wilder's theorem]
Let $g(z) = \sum_{k=1}^n A_ke^{\omega_kz}$, where $z=x+iy\in\mathbb{C}$, $\mathbb{C}\ni A_k\ne 0$, $k=1,2,\ldots,n$, and $\omega_1<\ldots\omega_n$. Then there exsits $K>0$ such that
\begin{enumerate}[\quad\upshape (i)]
	\item all zeros of $g(z)$ lie in the strip $|x|\leq K$;
	\item for each pair $(\alpha,\beta)\in\mathbb{R}^2$ with $\beta>0$, we have
	\[
	\left|N_0(\alpha,\beta,K) - \beta\frac{\omega_n-\omega_1}{2\pi}\right|\leq n-1,
	\]
	where $N_0(\alpha,\beta,K)$ counts the number of zeros of $g(z)$ in the rectangle $|x|\leq K$, $y\in[\alpha,\alpha+\beta]$.
\end{enumerate}
\end{lemma}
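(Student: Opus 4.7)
My plan for proving Wilder's theorem proceeds along classical lines: part (i) from a dominant-term comparison, part (ii) from the argument principle applied to the rectangle.

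For (i), I would write $g(z)=A_n e^{\omega_n z}\bigl(1+\sum_{k<n}(A_k/A_n)e^{(\omega_k-\omega_n)z}\bigr)$; since $\omega_k-\omega_n<0$ for $k<n$, the sum in parentheses has modulus less than $\tfrac12$ once $x:=\Re z$ is sufficiently large, hence $|g(z)|\geq\tfrac12|A_n|e^{\omega_n x}>0$. A symmetric manipulation, factoring $A_1 e^{\omega_1 z}$ instead, handles large negative $x$. Choosing $K$ beyond both thresholds confines every zero of $g$ to $|x|\leq K$.

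For (ii) I would apply the argument principle, writing $N_0(\alpha,\beta,K)=\tfrac{1}{2\pi}\Delta_{\partial R}\arg g$ where $R$ is the rectangle $|x|\leq K$, $\alpha\leq y\leq\alpha+\beta$, traversed counter-clockwise. On the right side $x=K$, the factorisation $g(K+iy)=A_n e^{\omega_n(K+iy)}(1+\delta_R(y))$ with $\sup_y|\delta_R(y)|$ negligible once $K$ is enlarged yields $\arg g(K+iy)=\arg A_n+\omega_n y+\arg(1+\delta_R(y))$, so the change of argument on the right side equals $\omega_n\beta$ up to an arbitrarily small error. An analogous computation on the left side contributes $-\omega_1\beta$, and together these produce the expected main term $\beta(\omega_n-\omega_1)/(2\pi)$.

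The crux is bounding the change of argument along the two horizontal sides. Fixing $y=y_0$ and writing $h(x):=g(x+iy_0)=\sum_{k=1}^n B_k e^{\omega_k x}$ with $B_k:=A_k e^{i\omega_k y_0}$, one observes that $\operatorname{Im}h(x)=\sum_{k=1}^n\operatorname{Im}(B_k)\,e^{\omega_k x}$ is itself a real exponential sum. The Laguerre--P\'olya lemma (proved by induction on $n$: apply Rolle's theorem to the derivative of $e^{-\omega_1 x}h(x)$ and invoke the inductive bound for an $(n-1)$-term real exponential sum) then gives that $\operatorname{Im}h$ has at most $n-1$ real zeros. Between consecutive zeros of $\operatorname{Im}h$, the curve $h$ remains in the open upper or lower half-plane, so a continuous lift of $\arg h$ stays in an open arc of length $\pi$ and changes there by at most $\pi$; summing over the at most $n$ subintervals of $[-K,K]$ cut out by these zeros bounds the total change of $\arg h$ by a fixed multiple of $\pi$.

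Combining the four sides yields $\bigl|2\pi N_0(\alpha,\beta,K)-(\omega_n-\omega_1)\beta\bigr|\leq 2(n-1)\pi$, and dividing by $2\pi$ gives the claimed bound. The main obstacle I anticipate is the sharp numerical constant: the naive accounting of horizontal-side contributions loses a full $\pi$ (giving $n$ rather than $n-1$), so one must carefully exploit that the two extreme subintervals on each horizontal side (those adjacent to $x=\pm K$) contribute strictly less than $\pi$ to the argument change, absorbing the residual slack into the vertical-side error by enlarging $K$.
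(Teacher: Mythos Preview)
The paper does not prove this lemma at all: it simply refers the reader to \cite[Theorem~1]{Dickson}. Your proposal therefore supplies strictly more than the paper does, and the route you outline---dominant-term comparison for (i), argument principle plus a Rolle-type bound on real zeros of $\operatorname{Im}g(x+iy_0)$ for (ii)---is precisely the classical argument that underlies the cited result. In that sense your approach and the paper's are not in tension; you are reconstructing what the paper outsources.

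One comment on the sharp constant, which you already flag as the obstacle. The naive count indeed gives $|\Delta\arg h|\le n\pi$ on each horizontal side and hence only $|N_0-\beta(\omega_n-\omega_1)/2\pi|\le n$. Your proposed fix---exploiting that on the two outermost subintervals $h(x)$ is close to $A_ne^{\omega_n x}$ (respectively $A_1e^{\omega_1 x}$) and so contributes strictly less than $\pi$---is the right instinct, but making it rigorous with exactly the constant $n-1$ requires a slightly more global argument than just ``enlarge $K$'': one typically tracks the argument of $g$ around the full boundary and observes that the corner contributions from the horizontal and vertical sides partially cancel, or equivalently applies the argument principle to $g(z)/(A_ne^{\omega_n z})$ on a half-rectangle. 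If you only need the result as a black box (which is how the paper uses it, to bound the number of zeros of $f(x)=\sum\overline{a_k}e(d_kx)$ on $[0,1]$), the weaker bound with $n$ in place of $n-1$ would already suffice after adjusting the choice of $L$ by one.
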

\begin{proof}
	For the proof we refer to \cite[Theorem 1]{Dickson}.
\end{proof}

\begin{proof}[Proof of Lemma \ref{lem:denseness}]
Let $\theta_{p_m} = \frac{m}{L}$, where $p_m$ denotes the $m$-th prime number and $L$ is an arbitrary integer satisfying $L>n-1+d_n-d_1$. We shall use Lemma \ref{lem:Pech} for the real Hilbert space $H=\mathbb{C}^n$ equipped with the standard inner product and the sequence
\[
u_m = \left(\frac{e(d_1\theta_{p_m})}{p_m^s},\ldots,\frac{e(d_n\theta_{p_m})}{p_m^s}\right).
\]
Obviously, $\sum_{m=1}^\infty\Vert u_m\Vert^2<\infty$, since $\sigma>1/2$. Thus, in order to apply Lemma \ref{lem:Pech}, it suffices to show that for every $e=(a_1,\ldots,a_n)\in \mathbb{C}^n$ with $\Vert e\Vert_H =1$ there are two permutations of  the series $\sum_p (u_m|e)$ tending to $+\infty$ and $-\infty$, respectively. In fact, we show only the existence of a permutation of the series, which diverges to $+\infty$, since the case $-\infty$ is similar and can be left to the reader.

Let us note that
\[
(u_m|e) = \Re\sum_{k=1}^n \overline{a_k}\frac{e(d_k\theta_{p_m})}{p_m^s} = \frac{1}{p_m^\sigma}\Re\left(e^{-it\log p_m}\left(\sum_{k=1}^n\overline{a_k}e(d_k\theta_{p_m})\right)\right)
\]
and consider the function $f(x) = \sum_{k=1}^n\overline{a_k}e(d_k x)$, $x\in\mathbb{R}$. Putting $A=\sum_{k=1}^n |a_k|^2$, one can easily deduce from the Cauchy-Schwarz inequality that for every $x\in\mathbb{R}$ we have
\begin{align*}
|f(x+\tau)-f(x)| = \left|\sum_{k=1}^n\overline{a_k}e(d_k x)(e(d_k\tau) - 1)\right|\leq 2\pi \sum_{k=1}^n|a_k|\Vert d_k\tau\Vert\leq 2\pi\sqrt{A}\left(\sum_{k=1}^n\Vert d_k\tau\Vert^2\right)^{1/2}.
\end{align*}
Now, from Corollary \ref{cor:fromChen}, we see that there is a positive integer $N$ such that every interval of length $N$ contains an integer $h$ satisfying $\sum_{k=1}^n\|hd_k\|^2\leq \frac{\varepsilon^2}{(2\pi)^2A}$. Therefore, dividing the interval $(0,\infty)$ into disjoint intervals $I_l$, $l\in\mathbb{N}$, of length $N$, one can find integers $\tau_l\in I_l$ satisfying $\sum_{k=1}^n\|\tau_ld_k\|^2\leq \frac{\varepsilon^2}{(2\pi)^2A}$. Hence, for every $x\in\mathbb{R}$ and $l\in\mathbb{N}$, we have
\[
|f(x+\tau_l)-f(x)|\leq \varepsilon.
\]

Now, let us observe that the function $f(x)$ has at most $n-1+d_n-d_1$ zeros in the interval $[0,1]$. Hence, by the choice of $L$, there is $m_0$ such that $0\leq m_0\leq L-1$ and $f(\frac{m_0}{L})=c_0\ne 0$. Thus for every $\tau_l$ we have
\[
f\left(\frac{m_0+L\tau_l}{L}\right) = f\left(\frac{m_0}{L}+\tau_l\right) = c_0+\xi_l,\qquad\text{for suitable $|\xi_l|\leq \varepsilon$},
\]
so each interval $[m_0+kLN,m_0+(k+1)NL]$, $k=1,2,\ldots$, contains an integer $m_k'$ satisfying $f(\frac{m_k'}{L})=c_0+\xi_l$ with $|\xi_l|\leq \varepsilon$. Assume that $c_0 = |c_0|e^{i\varphi}$. Then for every $m_k'$ and $m\in\mathbb{N}$ such that
\[
\frac{2\pi m}{t}-\frac{\pi}{4t}+\frac{\varphi}{t} < \log p_{m_k'}<\frac{2\pi m}{t}+\frac{\pi}{4t}+\frac{\varphi}{t}
\]
and sufficiently small $\varepsilon>0$ we have
\[
\Re\left(e^{-it\log p_{m_k'}}f\left(\frac{m_k'}{L}\right)\right)\geq c'>0.
\]
Hence it remains to notice that simple calculations and the prime number theorem imply that
\[
\sum_{\substack{m_k'\\\frac{2\pi m}{t}-\frac{\pi}{4t}+\frac{\varphi}{t}<\log p_{m_k'}<\frac{2\pi m}{t}+\frac{\pi}{4t}+\frac{\varphi}{t}}}\frac{1}{p_{m_k'}^\sigma}\gg \frac{1}{m},
\]
so using the fact that the harmonic series is divergent completes the proof.
\end{proof}

\section{Approximation by a finite sum over primes}

In this section we use the following Tsang's lemma in order to show that the logarithm of the Riemann zeta-function can be approximated by a truncated Dirichlet series over primes for sufficiently many arguments from any vertical line lying in the right open half of the critical strip.
\begin{lemma}[Tsang \cite{Tsang}]
Let $\sigma\in(1/2,1]$ and $0<\delta<\frac{1}{64}$ be fixed. Then, for sufficiently large $T$, we have
\[
\frac{1}{T}\int_T^{2T}\left|\log\zeta(\sigma+it) - \sum_{p\leq T^\delta}\frac{1}{p^{s+it}}\right|^2\ll T^{c(1/2-\sigma)},
\]
where $c$ is a positive constant.
\end{lemma}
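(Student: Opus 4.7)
The plan is to deduce this mean-square bound from an effective Selberg-type expansion of $\log\zeta$ combined with classical zero-density estimates. Starting from the Dirichlet series identity $\log\zeta(s) = \sum_{n\geq 2}\Lambda(n)/(n^s\log n)$, valid for $\Re s > 1$, I would introduce a smooth weight $\phi_X$ concentrated on integers $n \leq X$ with $X = T^\delta$ and apply Mellin inversion. Shifting the resulting contour past the line $\Re s = \sigma$ produces a decomposition
\[
\log\zeta(s) = \sum_{n\leq X}\frac{\Lambda(n)}{n^s\log n} + R_{\mathrm{pole}}(s,X) + R_{\mathrm{zeros}}(s,X) + R_{\mathrm{tail}}(s,X),
\]
in which $R_{\mathrm{pole}}$ comes from the pole of $\zeta$ at $s=1$, $R_{\mathrm{zeros}}$ is a sum over the non-trivial zeros $\rho$ of $\zeta$ crossed during the shift, and $R_{\mathrm{tail}}$ collects the truncation errors from the Mellin contour.

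Next I would split $\sum_{n\leq X}\Lambda(n)/(n^s\log n)$ into the prime contribution $\sum_{p\leq X}p^{-s}$ (which is the desired main term) and a prime-power tail $\sum_{k\geq 2,\,p^k\leq X} p^{-ks}/k$. Because $\sigma>1/2$, the latter is bounded in absolute value by $\sum_p\sum_{k\geq 2}p^{-k\sigma}/k = O(1)$, so contributes $O(1)$ to the $L^2$-mean. The pole term $R_{\mathrm{pole}}$ is polynomially small for $t\asymp T$, and $R_{\mathrm{tail}}$ reduces to mean-square estimates for Dirichlet polynomials of length at most $X=T^\delta$, for which the Montgomery--Vaughan mean value theorem gives bounds comfortably within the target exponent $c(1/2-\sigma)$.

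The crux of the argument is the estimation of $R_{\mathrm{zeros}}(s,X)$, a sum whose generic term is essentially of the form $X^{\rho-s}/(\rho-s)^2$. The strategy is to partition the non-trivial zeros into dyadic strips according to their real parts, and to bound the $L^2$-mean contribution of each strip via a zero-density estimate of the form $N(\alpha,T)\ll T^{A(1-\alpha)+\varepsilon}$ for a suitable absolute constant $A$. Each zero $\rho$ with $\Re\rho=\alpha$ contributes the factor $X^{\alpha-\sigma}$, so the strip's contribution is controlled by $X^{2(\alpha-\sigma)}N(\alpha,T)/T$; summing dyadically over $\alpha\in[1/2,1]$ and optimizing yields a total bound of size $T^{c(1/2-\sigma)}$ with some positive absolute $c$, provided $\delta$ is taken small enough that $A\delta$ is less than the available margin. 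The numerical threshold $1/64$ is precisely what the Ingham-type zero-density exponent forces on $\delta$.

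The main obstacle is this final step. The zeros close to the line $\Re s=\sigma$ are the dangerous ones because the suppression factor $X^{\rho-\sigma}$ is weak, whereas zeros farther to the left are numerous but individually harmless. Balancing these two regimes via the zero-density estimate is the analytic heart of Tsang's argument, and it dictates both the threshold $\delta<1/64$ and the positivity of the exponent $c$.
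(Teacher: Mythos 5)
The paper offers no proof of this lemma at all -- it is imported verbatim from Tsang's thesis -- so your sketch can only be measured against the Selberg--Tsang argument it is paraphrasing, and there it has a decisive gap at the prime-power step. You split $\sum_{n\le X}\Lambda(n)/(n^{s}\log n)$ into $\sum_{p\le X}p^{-s}$ plus the tail $\sum_{k\ge 2,\,p^k\le X}p^{-ks}/k$ and dismiss the tail as ``$O(1)$ in the $L^2$-mean''. But the right-hand side of the lemma is $T^{c(1/2-\sigma)}=T^{-c(\sigma-1/2)}$, which tends to $0$ for fixed $\sigma>1/2$, so an $O(1)$ contribution is fatal rather than harmless. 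Worse, the tail really is of constant size: it is an absolutely convergent Dirichlet series supported on prime powers $p^k$, $k\ge 2$, whose frequencies are distinct from those of the remaining terms, so (by Montgomery--Vaughan, or Besicovitch almost-periodicity) its normalized mean square over $[T,2T]$ converges to $\sum_{k\ge2}\sum_p k^{-2}p^{-2k\sigma}>0$. Hence your decomposition cannot produce the stated bound; what Tsang (following Selberg) actually controls is the approximation of $\log\zeta(\sigma+it)$ by the prime-power sum $\sum_{n\le T^{\delta}}\Lambda(n)/(n^{\sigma+it}\log n)$, and once you strip it down to primes only, the displayed inequality as quoted is not even true for fixed $\sigma$ (for $\sigma=1$ the left side stays above roughly $\tfrac14\sum_p p^{-4}$ while the right side decays). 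So either you must prove the $k\ge2$ part is $\ll T^{c(1/2-\sigma)}$, which is impossible, or you must keep the prime powers in the approximant; your proposal does neither.

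There are also substantive problems in the treatment of the zero term, which you yourself call the analytic heart but do not actually carry out. First, $\log\zeta$ is not meromorphic: the terms $X^{\rho-s}/(\rho-s)^2$ come from a kernel-weighted $\zeta'/\zeta$, and passing from $\zeta'/\zeta$ back to $\log\zeta$ (integrating from a far-right abscissa while avoiding the logarithmic branch points at zeros crossing the path) is a major component of Selberg's and Tsang's work, not a formality your sketch can skip. Second, the strip-by-strip bound ``$X^{2(\alpha-\sigma)}N(\alpha,T)/T$'' omits the factor produced by the $t$-integration of $|\rho-s|^{-4}$, which is of order $|\beta-\sigma|^{-3}$ and blows up for zeros with real part near $\sigma$; the dyadic sum you describe diverges exactly in that critical range, and one needs an additional device -- an exceptional-set argument controlled by a zero-density estimate together with a separate mean-square bound for $\log\zeta$ on the exceptional ordinates, or Selberg's mollifier $\sigma_{x,t}$ -- to close it. This machinery, rather than a bare Ingham exponent, is also where the numerical condition $\delta<1/64$ actually originates, so that attribution in your final paragraph is unsubstantiated as well.
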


\begin{lemma}\label{lem:approxFiniteSum}
	Let $\varepsilon,\varepsilon'>0$, $d_1<d_2<\ldots<d_n$, $s=\sigma+it$ with $\sigma\in(1/2,1], t>0$, $0<\delta<\frac{1}{65}$ and $C(T,\varepsilon)$ denotes the set of $\tau\in[T,2T]$ satisfying
	\[
	\max_{1\leq k\leq n}\left|\log\zeta(\sigma+id_k\tau) - \sum_{p\leq T^\delta}\frac{1}{p^{s+id_k\tau}}\right|<\varepsilon.
	\]
	Then, for sufficiently large $T$, we have
	\[
	\frac{1}{T}\meas C(T,\varepsilon)>1-\varepsilon'.
	\]
\end{lemma}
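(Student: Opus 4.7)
The strategy is to establish an $L^2$-mean bound for each shift $d_k$ via Tsang's lemma, apply Chebyshev's inequality to control the measure of the bad set, and then take a union bound over the $n$ shifts.

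Fix $k$ and substitute $u = d_k\tau$:
\[
\int_T^{2T}\Bigl|\log\zeta(s+id_k\tau) - \sum_{p\leq T^\delta}p^{-s-id_k\tau}\Bigr|^2 d\tau = \frac{1}{d_k}\int_{d_kT}^{2d_kT}\Bigl|\log\zeta(s+iu) - \sum_{p\leq T^\delta}p^{-s-iu}\Bigr|^2 du.
\]
I would then apply Tsang's lemma on the interval $[d_kT, 2d_kT]$, which controls the analogous quantity but with truncation at $(d_kT)^\delta$ instead of $T^\delta$. To bridge the two truncations, split the integrand by the triangle inequality and estimate the short Dirichlet polynomial $\sum_{T^\delta < p \leq (d_kT)^\delta}p^{-s-iu}$ via the standard mean-value theorem for Dirichlet polynomials. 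Since $\sigma > 1/2$, the tail sum $\sum_{p > T^\delta} p^{-2\sigma}$ is $O(T^{\delta(1-2\sigma)}) = o(1)$, and the condition $\delta < 1/65$ comfortably ensures that both terms produced by the mean-value theorem contribute $o(T)$ to the mean-square on $[d_kT, 2d_kT]$.

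Combining the Tsang bound $(d_kT)^{c(1/2-\sigma)} = o(1)$ with the short-polynomial estimate gives
\[
\frac{1}{T}\int_T^{2T}\Bigl|\log\zeta(s+id_k\tau) - \sum_{p\leq T^\delta}p^{-s-id_k\tau}\Bigr|^2 d\tau \longrightarrow 0 \quad \text{as } T \to \infty.
\]
Chebyshev's inequality then shows that, for each fixed $k$, the measure of $\tau \in [T,2T]$ where the $k$-th error exceeds $\varepsilon$ is $o(T)$. A union bound over $k = 1, \ldots, n$ yields $\meas([T,2T] \setminus C(T,\varepsilon)) = o(T)$, and hence $\frac{1}{T}\meas C(T,\varepsilon) > 1 - \varepsilon'$ for all sufficiently large $T$.

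The only mildly technical point is reconciling the truncation at $T^\delta$ appearing in the lemma with the truncation at $(d_kT)^\delta$ that Tsang's lemma naturally produces after the change of variables; this is handled by a routine application of the mean value theorem for Dirichlet polynomials together with $\sigma > 1/2$. All the real content of the proof is packaged into Tsang's mean-square estimate, which is invoked as a black box.
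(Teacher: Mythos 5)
Your proposal follows essentially the same route as the paper: Tsang's lemma applied after the change of variables $u=t+d_k\tau$, the Montgomery--Vaughan mean value theorem for Dirichlet polynomials to reconcile the truncation at $T^\delta$ with the truncation Tsang's lemma produces, and then a Chebyshev-type argument with a union bound over $k$. The only extra care the paper takes is to discard the (negligible) set of $\tau$ for which some $\zeta(s+id_k\tau)=0$, so that $\log\zeta(s+id_k\tau)$ is actually defined there, which it bounds via a zero-density estimate; adding this routine step to your argument makes it match the paper's proof.
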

\begin{proof}
First let us observe that using Tsang's lemma and a suitable version of Hilbert's inequality due to Montgomery and Vaughan (see \cite{MV}) give
\begin{align*}
\int_{T}^{2T}\left|\log\zeta(s+id_k\tau)-\sum_{p\leq T^\delta}\frac{1}{p^{s+id_k\tau}} \right|^2d\tau 
&\ll \int_{d_kT+t}^{2(d_kT+t)}\left|\log\zeta(\sigma+i\tau)-\sum_{p\leq T^\delta}\frac{1}{p^{\sigma+i\tau}} \right|^2d\tau\\
&\ll \int_{d_kT+t}^{2(d_kT+t)}\left|\log\zeta(\sigma+i\tau)-\sum_{p\leq (d_kT+t)^{1/65}}\frac{1}{p^{\sigma+i\tau}} \right|^2d\tau\\
&\quad+\int_{d_kT+t}^{2(d_kT+t)}\left|\sum_{T^\delta\leq p\leq(d_kT+t)^{1/65}}\frac{1}{p^{\sigma+i\tau}} \right|^2d\tau\\
&\ll T^{1+c(1/2-\sigma)} + T\sum_{p\leq (d_jT+t)^{1/65}}\frac{1}{p^{2\sigma}}\\
&\ll T^{1+c'(1/2-\sigma)}
\end{align*}
for some $c'>0$ and $k=1,2,\ldots,n$. Hence for every $\varepsilon_0>0$ and sufficiently large $T$ we have
\[
\sum_{k=1}^n\frac{1}{T}\int_{T}^{2T}\left|\log\zeta(s+id_k\tau)-\sum_{p\leq T^\delta}\frac{1}{p^{s+id_k\tau}} \right|^2d\tau <\varepsilon_0.
\]

Now let 
\begin{align*}
A_T' &= \left\{\tau\in[T,2T]: \sum_{k=1}^n|\log\zeta(s+id_k\tau) - \sum_{p\leq T^\delta}\frac{1}{p^{s+id_k\tau}}|>\varepsilon_0^{2/3}\right\},\\
A_T'' &= \left\{\tau\in[T,2T]: \exists_{1\leq k\leq n}\zeta(s+id_k\tau)=0\right\}.
\end{align*}
It is well known that $A_T''\ll T^{1+c''(1/2-\sigma)}\log T$ for some $c''>0$, so for suffciently large $T$ we have $A_T''\ll \varepsilon_0 T$. Moreover,
\[
\varepsilon_0^{2/3}\frac{\meas(A_T')}{T}<\sum_{k=1}^n\frac{1}{T}\int_{A_T'}\left|\log\zeta(s+id_k\tau)-\sum_{p\leq T^\delta}\frac{1}{p^{s+id_k\tau}} \right|^2d\tau\leq \varepsilon_0,
\]
so $\meas(A_T')\leq \varepsilon_0^{1/3}T$. Thus, putting $A_T=A_T'\cup A_T''$ and choosing $\varepsilon_0$ sufficiently small, we get that  $\meas(A_T)\leq \varepsilon'$ and 
\[
\sum_{k=1}^n|\log\zeta(s+id_k\tau) - \sum_{p\leq T^\delta}\frac{1}{p^{s+id_k\tau}}|<\varepsilon
\]
for $\tau\in [T,2T]\setminus A_T$.
\end{proof}

\section{Proof of the main theorem}

In order to prove our main theorem we need the following crucial lemma which allows us to connect Lemma \ref{lem:denseness} and Lemma \ref{lem:approxFiniteSum}.
\begin{lemma}\label{lem:middle}
Let $d_1<d_2<\cdots<d_n\in\mathbb{R}_+$ and $s=\sigma+it$ with $\sigma\in(1/2,1]$ and $t>0$. Moreover, suppose that $y>0$, $0<\delta<\frac{1}{65}$, $0<d<1/2$, $M$ is a finite set of prime numbers such that $\{p:p\leq y\}\subset M$ and $(\theta_p)_{p\in M}$ is a sequence of real numbers. Then for sufficiently large $T$ we have
\[
\max_{1\leq k\leq n}\int_{A_d(T)}\left|\sum_{\substack{p\leq T^\delta\\p\not\in M}}\frac{1}{p^{s+id_k\tau}}\right|^2d\tau\ll T(2d)^{|M|}y^{1-2\sigma},
\]
where $A_d(T) = \{\tau\in [T,2T]: \max_{p\in M}\|-\tau\frac{\log p}{2\pi}-\theta_p\|\leq d\}$.
\end{lemma}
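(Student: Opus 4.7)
The plan is to majorize the indicator of $A_d(T)$ by a nonnegative trigonometric majorant factoring multiplicatively over primes in $M$, and thereby reduce the integral to the mean square of a Dirichlet-type polynomial controllable via the Montgomery--Vaughan inequality.

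Fix a large integer parameter $K$ (to be chosen of order $|M|/d$). By the classical Selberg construction there is a nonnegative trigonometric polynomial $F$ of degree $\le K$ on $\mathbb{R}/\mathbb{Z}$ majorizing $\mathbf{1}_{\|x\|\le d}$ pointwise with $\widehat{F}(0) = 2d + O(1/K)$. The Fej\'er--Riesz theorem then gives $F = |H|^2$ for some trigonometric polynomial $H$ of degree $\le K$. Setting
\[
G(\tau) := \prod_{p \in M} F\!\left(-\tau\tfrac{\log p}{2\pi} - \theta_p\right),
\]
we obtain $\mathbf{1}_{A_d(T)}(\tau) \le G(\tau)$ pointwise and $G = |Q|^2$, where $Q(\tau) := \prod_{p \in M} H(-\tau\log p/(2\pi) - \theta_p)$ expands as $\sum_{\mathbf{k}} c_\mathbf{k}\, n_\mathbf{k}^{-i\tau}$ with $n_\mathbf{k} := \prod_{p \in M} p^{k_p}$. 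Writing $S_k(\tau) := \sum_{p \le T^\delta,\, p \notin M} p^{-s - id_k \tau}$, one has the clean inequality
\[
\int_{A_d(T)} |S_k(\tau)|^2\, d\tau \le \int_T^{2T} |Q(\tau) S_k(\tau)|^2\, d\tau.
\]

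The product $QS_k$ is a generalized Dirichlet polynomial whose frequencies $\log n_\mathbf{k} + d_k \log p$ are pairwise distinct: any coincidence would force $(p/q)^{d_k} = n_\mathbf{l}/n_\mathbf{k}$, but since $p,q \notin M$ this is ruled out by unique factorization when $d_k \in \mathbb{Q}$ and by Gelfond--Schneider / Baker type transcendence when $d_k$ is irrational algebraic. Montgomery--Vaughan's mean value inequality then yields a diagonal main term
\[
T \sum_{\mathbf{k}, p} |c_\mathbf{k}|^2\, p^{-2\sigma} = T\, \widehat{F}(0)^{|M|} \sum_{p \le T^\delta,\, p \notin M} p^{-2\sigma},
\]
using Parseval to identify $\sum_\mathbf{k} |c_\mathbf{k}|^2 = \|H\|_{L^2}^{2|M|} = \widehat{F}(0)^{|M|}$. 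Choosing $K \gg |M|/d$ forces $(2d + 1/K)^{|M|} \ll (2d)^{|M|}$ with an absolute constant, while the Mertens/prime-number-theorem estimate $\sum_{p \ge y} p^{-2\sigma} \ll y^{1-2\sigma}$ handles the prime sum since $M \supset \{p < y\}$. This yields the target bound $\ll T(2d)^{|M|} y^{1-2\sigma}$ for the main term.

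The principal obstacle is bounding the Montgomery--Vaughan off-diagonal error $\sum_{(\mathbf{k}, p)} |c_\mathbf{k}|^2 p^{-2\sigma}/\delta_{\mathbf{k}, p}$, where $\delta_{\mathbf{k}, p}$ denotes the minimal spacing between $\log n_\mathbf{k} + d_k \log p$ and any other frequency. These gaps can in principle be small, because rationals built from $M$-primes can closely approximate $(q/p)^{d_k}$; however, such near-coincidences require the coordinates of $\mathbf{k}$ or $\mathbf{l}$ to be large, and this is compensated by the polynomial decay of the Selberg Fourier coefficients $c_\mathbf{k}$. Checking that this contribution remains of lower order than $T(2d)^{|M|}y^{1-2\sigma}$ for $T$ sufficiently large, with implicit constants allowed to depend on the fixed parameters $d_1,\ldots,d_n$, completes the argument.
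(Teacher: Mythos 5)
Your overall strategy (majorize $\mathbf{1}_{A_d(T)}$ pointwise by a product over $p\in M$ of nonnegative trigonometric polynomials and then apply the Montgomery--Vaughan mean value inequality) is the same as the paper's, and the Selberg/Fej\'er--Riesz majorant is in fact a cleaner choice than the paper's tent-function plus uniform approximation. However, the step you yourself flag as the ``principal obstacle'' is a genuine gap, and your proposed remedy does not close it. The dangerous near-coincidences among the frequencies $\log n_{\mathbf{k}}+d_k\log p$ are not those with large coordinates of $\mathbf{k}$: already for $\mathbf{k}=\mathbf{l}$ (or both small) and two distinct large primes $p,q\le T^\delta$ the gap can be as small as roughly $T^{-\delta d_k}$ when $d_k\in\mathbb{N}$ (since $p^{d_k}n_{\mathbf{k}}$ and $q^{d_k}n_{\mathbf{l}}$ are distinct integers of size $\ll T^{\delta d_k}$), so the off-diagonal Montgomery--Vaughan contribution can dominate $T$ once $\delta d_k>1$, and the decay of the Selberg coefficients $c_{\mathbf{k}}$ gives no help for these pairs. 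Worse, the lemma is stated for arbitrary $d_k\in\mathbb{R}_+$: for a general (not necessarily algebraic) irrational $d_k$ there is no usable lower bound on $|d_k\log(p/q)-\log(n_{\mathbf{l}}/n_{\mathbf{k}})|$ when $p,q$ range up to $T^\delta$, and your appeal to Gelfond--Schneider/Baker yields at best distinctness (not a quantitative gap) and only for algebraic $d_k$; the paper instead rules out coincidences for arbitrary irrational $d_k$ via a Six Exponentials Theorem consequence combined with taking $y$ large.

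The missing device is the paper's splitting of the prime sum at a fixed threshold $N'$ independent of $T$. For the initial segment $p\le N'$ the frequency set is finite and fixed, so the minimal spacing $\delta_k$ is a positive constant and the Montgomery--Vaughan error $O(\delta_k^{-1})$ is negligible against $T$ for large $T$; for the tail $N'<p\le T^\delta$ one drops the majorant altogether (no factor $(2d)^{|M|}$ is needed there), applies Montgomery--Vaughan with the easy spacing $\gg T^{-\delta}$ coming from consecutive primes, and obtains $\ll TN'^{1-2\sigma}$, which is then absorbed by choosing $N'$ so large that $N'^{1-2\sigma}\le (2d)^{|M|}y^{1-2\sigma}$. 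Without some such truncation your single application of Montgomery--Vaughan to $QS_k$ with all primes up to $T^\delta$ cannot be completed as written.
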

\begin{proof}
	Let $\delta'>0$ be such that $0<d+\delta'<1/2$ and $\xi:\mathbb{R}\to [0,1]$ be a continuous and periodic function with period $1$ defined on $[-1/2,1/2]$ by
	\[
	\xi(x) = \begin{cases}
	1,&|x|\leq d,\\
	-\frac{|x|}{\delta'}+\frac{d+\delta'}{\delta'},&d<|x|\leq d+\delta',\\
	0,&d+\delta'<|x|\leq \frac{1}{2}.
	\end{cases}
	\]
Moreover, we put 
\[
\tilde{\xi}(\tau) = \prod_{p\in M}\xi\left(-\frac{\tau\log p}{2\pi}- \theta_p\right)\qquad\text{for $\tau\in\mathbb{R}$}.
\]
Then $\xi(x) = 1$ if $\|x\|\leq d$ and for $\tau\in A_d(T)$ we have $1=\tilde{\xi}(\tau) = \tilde{\xi}^2(\tau)$, and for $\tau\in\mathbb{R}$ we have $0\leq \tilde{\xi}^2(\tau)\leq \tilde{\xi}(\tau)\leq 1$.

Now let us assume that $\varepsilon_0>0$ is small. Then there is a trigonometric polynomial
\[
P(x) = \sum_{-L\leq \ell\leq L}c(\ell)e(\ell x),\qquad c(\ell)\in\mathbb{C}
\]
such that $|\xi(x)-P(x)|<\varepsilon_0$ for $\tau\in\mathbb{R}$. So let us define 
\[
\tilde{P}(\tau) = \prod_{p\in M}P\left(-\frac{\tau\log p}{2\pi}- \theta_p\right).
\]
Using the fact that $\left|\prod_{j=1}^{n}a_j-\prod_{j=1}^{n}b_j\right|\leq R^{n-1}\sum_{j=1}^n|a_j-b_j|$ for arbitrary complex numbers $a_1,\ldots,a_n,b_1,\ldots,b_n$ with $|a_j|,|b_j|\leq R$, $j=1,2,\ldots,n$, we get
\begin{align*}
\left|\prod_{p\in M}\xi(x_p)-\prod_{p\in M}P(x_p)\right| = \left|\prod_{p\in M}\xi(x_p)-\prod_{p\in M}\xi(x_p)+(P(x_p)-\xi(x_p))\right|\leq 2^{|M|-1}|M|\varepsilon_0
\end{align*}
for all real $x_p$. Hence $|\tilde{\xi}(\tau)-\tilde{P}(\tau)|\leq 2^{|M|-1}|M|\varepsilon_0$ for $\tau\in\mathbb{R}$. Therefore, by the Cauchy--Schwarz inequality, we have
\[
|\tilde{\xi}(\tau)|^2 \leq 2|\tilde{P}(\tau)|^2+2^{2|M|-1}|M|^2\varepsilon_0^2,\qquad\tau\in\mathbb{R}
\]
and
\[
|\tilde{P}(\tau)|^2 \leq 2|\tilde{\xi}(\tau)|^2+2^{2|M|-1}|M|^2\varepsilon_0^2,\qquad\tau\in\mathbb{R}.
\]

Now, let $N'>y$. Then for $\tau\in\mathbb{R}$ we have
\[
\sum_{\substack{p\leq T^\delta\\p\not\in M}}\frac{1}{p^{s+id_k\tau}} = \sum_{\substack{p\leq N'\\p\not\in M}}\frac{1}{p^{s}}e\left(-\frac{d_k\tau\log p}{2\pi}\right)+\sum_{\substack{N'<p\leq T^\delta\\p\not\in M}}\frac{1}{p^{s}}e\left(-\frac{d_k\tau\log p}{2\pi}\right)=: S_1+S_2,
\]
so using the Cauchy--Schwarz inequality again yields
\[
\int_{A_d(T)}\left|\sum_{\substack{p\leq T^\delta\\p\not\in M}}\frac{1}{p^{s+id_k\tau}}\right|^2d\tau\leq 2\int_{A_d(T)}|S_1|^2d\tau+2\int_{A_d(T)}|S_2|^2d\tau =: I_1 + I_2.
\]

First let us estimate $I_1$. We have
\begin{align*}
I_1 
\leq \int_T^{2T}|\tilde{\xi}(\tau)|^2|S_1|^2d\tau\ll\int_T^{2T}|\tilde{P}(\tau)|^2|S_1|^2d\tau + 2^{2|M|-1}|M|^2\varepsilon_0^2\int_T^{2T}|S_1|^2d\tau
\end{align*}
and
\begin{align}
\tilde{P}(\tau) &= \prod_{p\in M}\left(\sum_{-L\leq \ell_p\leq L}c(\ell_p)e\left(\ell_p\left(-\frac{\tau\log  p}{2\pi}-\theta_p\right)\right)\right)\nonumber\\
&=\sum_{-L\leq \ell_{p_1},\ldots,\ell_{p_{|M|}}\leq L}\tilde{c}(\ell_{p_1},\ldots,\ell_{p_{|M|}})e\left(-\frac{\tau}{2\pi}\sum_{i=1}^{|M|}\ell_{p_j}\log p_j\right)\label{eq:Ptilde}
\end{align}
where $p_1,\ldots,p_{|M|}$ are all elements of $M$ and
\[
\tilde{c}(\ell_{p_1},\ldots,\ell_{p_{|M|}}) = c(\ell_{p_1})\cdots c(\ell_{p_{|M|}}) e\left(-\sum_{i=1}^{|M|}\ell_{p_j}\theta_{p_j}\right).
\]
Then
\begin{multline*}
\int_T^{2T}|\tilde{P}(\tau)|^2|S_1|^2d\tau \\ = \int_T^{2T}\left|\sum_{-L\leq \ell_{p_1},\ldots,\ell_{p_{|M|}}\leq L}\sum_{\substack{p\leq N'\\p\not\in M}}\tilde{c}(\ell_{p_1},\ldots,\ell_{p_{|M|}})\frac{1}{p^s}e\left(-\frac{\tau}{2\pi}\left(d_k\log p+\sum_{j=1}^{|M|}\ell_{p_j}\log p_j \right)\right)\right|^2 d\tau
\end{multline*}
Let \[
\Omega_k = \left\{-\left(d_k\log p+\sum_{j=1}^{|M|}\ell_{p_j}\log p_j \right):p\leq N',\ p\not\in M,\ -L\leq\ell_{p_1},\ldots,\ell_{p_{|M|}}\leq L\right\}\]
and 
\begin{align*}
x&:=x(p,\ell_{p_1},\ldots,\ell_{p_{|M|}})=-\left(d_k\log p+\sum_{j=1}^{|M|}\ell_{p_j}\log p_j \right),\\ 
y&:=y(q,\ell'_{p_1},\ldots,\ell'_{p_{|M|}})=-\left(d_k\log q+\sum_{j=1}^{|M|}\ell'_{p_j}\log p_j \right).
\end{align*}
We need to estimate
\[
\delta_k = \min_{\substack{x,y\in\Omega_k\\(p,\ell_{p_1},\ldots,\ell_{p_{|M|}})\ne (q,\ell'_{p_1},\ldots,\ell'_{p_{|M|})}}}|x-y|,
\]	
so we consider the following three cases.

\textit{Case 1:} $p=q$. Then $|x-y|>0$ by the fact that the sequence of logarithms of prime numbers are linearly independent over $\mathbb{Q}$.

\textit{Case 2:} $p\ne q$, $0\ne d_k\in\mathbb{Q}$. Similarly to the Case 1, by the unique factorization of integers, we have $x=y\leftrightarrow d_k=0,\ \ell'_{p_i}=\ell'_{p_i}$, so $|x-y|>0$.
	
\textit{Case 3:} $p\ne q$, $d_k\notin\mathbb{Q}$. Then $x=y$ means that for suitable $A,B\in\mathbb{Q}$ we have

\begin{equation}\label{eq:SixExpo}
d_k\log B = d_k(\log p-\log q) = \sum_{i=1}^{|M|}(\ell'_{p_i}-\ell_{p_i})\log p_i = \log A.
\end{equation}

Using the Six Exponentials Theorem one can prove (see \cite[Corollary 2.3 and Lemma 2.4]{PanWuerzburg}) the following result.
\begin{lemma}
For arbitrary irrational number $d$ there exist at most two primes $p_1,p_2$  such that if
\[
d = \frac{\log a}{\log b},\qquad\text{for some $a,b\in\mathbb{Q}$},
\]
then at least one of $p_1,p_2$ appears in the prime factorization of $b$.
\end{lemma}
Now, let us note that $B=p/q$ for some $p,q\notin M$, so $p,q>y$. Hence, taking sufficiently large~$y$ and using the above lemma we obtain that \eqref{eq:SixExpo} never holds, which means that $|x-y|>0$ in Case 3 as well.

Summing up all three cases we get $\delta_k>0$. Hence using again a suitable version of Hilbert's inequality due to Montgomery and Vaughan (see \cite[Corollary 2, Eq. (1.8)]{MV}) we see that for some $\theta_k$ with $|\theta_k|\leq 1$ and sufficiently large $T$ we have
\begin{align*}
\int_T^{2T}|\tilde{P}(\tau)|^2|S_1|^2d\tau &= (T+2\pi\delta_k^{-1}\theta_k)\sum_{-L\leq \ell_{p_1},\ldots,\ell_{p_{|M|}}\leq L}|\tilde{c}(\ell_{p_1},\ldots,\ell_{p_{|M|}})|^2\sum_{\substack{p\leq N'\\p\not\in M}}p^{-2\sigma}\\
&\ll Ty^{1-2\sigma}\sum_{-L\leq \ell_{p_1},\ldots,\ell_{p_{|M|}}\leq L}|\tilde{c}(\ell_{p_1},\ldots,\ell_{p_{|M|}})|^2.
\end{align*}
Similarly one can obtain
\[
\int_T^{2T}|\tilde{P}(\tau)|^2d\tau = (T+2\pi\delta_0^{-1}\theta_0)\tilde{c}(\ell_{p_1},\ldots,\ell_{p_{|M|}})|^2
\]
for some positive real $\delta_0$ and $theta_0$ with $|\theta_0|\leq 1$. Thus, we have
\begin{align*}
T|\tilde{c}(\ell_{p_1},\ldots,\ell_{p_{|M|}})|^2\ll \int_T^{2T}|\tilde{P}(\tau)|^2d\tau&\ll\int_T^{2T}\tilde{\xi}(\tau)d\tau+T2^{2|M|-1}|M|^2\varepsilon_0^2\\
&\leq \left|\int_T^{2T}\tilde{P}(\tau)\right|+T(2^{|M|-1}|M|\varepsilon_0+2^{2|M|-1}|M|^2\varepsilon_0^2).
\end{align*}

Note that from \eqref{eq:Ptilde} we get
\[
\int_T^{2T}\tilde{P}(\tau)d\tau = c(0)^{|M|}T+O(1).
\]
Moreover, we have
\[
|c(0)| = \left|\int_{-1/2}^{1/2}P(x)dx\right|\leq \int_{-1/2}^{1/2}(|\xi(x)|+\varepsilon_0)dx = 2d+\delta'+\varepsilon_0.
\]
Therefore, 
\[
\int_T^{2T}|\tilde{P}(\tau)|^2|S_1|^2d\tau\ll Ty^{1-2\sigma}\left((2d+\delta'+\varepsilon_0)^{|M|}+2^{|M|-1}|M|\varepsilon_0+2^{2|M|-1}|M|^2\varepsilon_0^2\right).
\]
Similarly using  Hilbert's inequality one can easily obtain that
\[
\int_T^{2T}|S_1|^2d\tau\ll Ty^{1-2\sigma},
\]
so 
\[
I_1\ll Ty^{1-2\sigma}\left((2d+\delta'+\varepsilon_0)^{|M|}+2^{2|M|+1}|M|^2\varepsilon_0\right).
\]

Next, in order to apply Hilbert's inequality to estimate $I_2$, we need to use the following estimate
\[
\delta_k(T) = d_k\min_{\substack{N'<p,q\leq T^\delta\\p,q\notin M}}|\log p-\log q|\geq d_k\min_{N'<p\leq T^\delta}\log\left(1+\frac{1}{p-1}\right)\geq \frac{d_k}{T^\delta}.
\]
So
\[
I_2\leq \left(T+\frac{2\pi T^\delta}{d_k}\right)\sum_{\substack{N'<p\leq T^\delta\\p\not\in M}}\frac{1}{p^{2\sigma}}\ll TN'^{1-2\sigma}.
\]
Choosing $\delta'$ and $\varepsilon_0$ such that
\[
\left(1+\frac{\delta'+\varepsilon_0}{2d}\right)^{|M|}<\frac{3}{2}\qquad\text{and}\qquad 2^{2|M|+1}|M|^2\varepsilon_0<\frac{1}{2}(2d)^{|M|}
\]
implies that $I_1\ll Ty^{1-2\sigma}(2d)^{|M|}$. 

Moreover, if $N'$ is such large that $N'^{1-2\sigma}\leq (2d)^{|M|}y^{1-2\sigma}$, then
\[
\int_{A_d(T)}\left|\sum_{\substack{p\leq T^\delta\\p\not\in M}}\frac{1}{p^{s+id_k\tau}}\right|^2d\tau\ll T(2d)^{|M|}y^{1-2\sigma},
\]
which completes the proof.
\end{proof}

Now, we are in the position to complete the proof of our theorem.

\begin{proof}[Proof of Theorem \ref{thm:main}]
From Lemma \ref{lem:denseness} we know that there exist a finite set of primes $M\supset \{p:p<y\}$ and a sequence $(\theta_p)_{p\in M}$ of real numbers such that
\[
\max_{1\leq k\leq n}\left|\sum_{p\in M}\frac{e(d_k\theta_p)}{p^s}-z_k\right|<\varepsilon.
\]
Let $A_d(T) = \{\tau\in [T,2T]: \max_{p\in M}\|-\frac{\tau \log p}{2\pi} - \theta_p\|\leq d\}$ and $d$ is such small that
\[
\max_{1\leq k\leq n}\left|\sum_{p\in M}\frac{e(-d_k\frac{\tau\log p}{2\pi})}{p^s}-\sum_{p\in M}\frac{e(d_k\theta_p)}{p^s}\right|<\varepsilon.
\]
Let us emphasize that the choice of such $d$ is possible since $d_1,\ldots,d_n\in\mathbb{N}$, and it is the only step in the proof where we need to assume that $d_1,\ldots,d_n$ are integers. 

Let us notice that for $\tau\in A_d(T)$ we have
\[
\max_{1\leq k\leq n}\left|\sum_{p\in M}\frac{1}{p^{s+id_k\tau}}-z_k\right|<2\varepsilon.
\]

Next let
\[
B(T,\varepsilon) = \left\{\tau\in A_d(T): \max_{1\leq k\leq n}\left|\sum_{\substack{p\leq T^\delta\\p\not\in M}}\frac{1}{p^{s+id_k\tau}}\right|<\varepsilon\right\},
\]
where $0<\delta<\frac{1}{65}$. Then, by Lemma~\ref{lem:middle}, for sufficiently large $y$ we have 
\[\frac{1}{T}\meas (B(T,\varepsilon))>\frac{1}{2}(2d)^{|M|}.\]
Thus, using the notation from Lemma~\ref{lem:approxFiniteSum}, we get that 
\[
\max_{1\leq k\leq n}|\log\zeta(s+id_k\tau)-z_k|<4\varepsilon\qquad \text{for $\tau\in B(T,\varepsilon)\cap C(T,\varepsilon)$,}
\]
and, since $\meas C(T,\varepsilon)>T(1-\varepsilon')$ for every $\varepsilon'>0$, we have
\[
\frac{1}{T}\meas B(T,\varepsilon)\cap C(T,\varepsilon)>\frac{1}{T}\meas B(T,\varepsilon)+\frac{1}{T}\meas C(T,\varepsilon) - 1>\frac{1}{2}(2d)^{|M|}-\varepsilon'
\]
which completes the proof, since the right hand side is positive for sufficiently small $\varepsilon'$.
\end{proof}


\begin{thebibliography}{10}
	
\bibitem {B81} B. Bagchi, \textit{The statistical Behavior and Universality Properties of the Riemann Zeta-Function and Other Allied Dirichlet Series}, Ph.D. Thesis, Calcutta, Indian Statistical Institute, (1981).
	
\bibitem{B87} {B. Bagchi}, {\it Recurrence in topological dynamics and the Riemann hypothesis}, Acta Math. Hung. {\bf 50} (1987), 227--240.

\bibitem{Bohr} H. Bohr, \textit{Z\"ur Theorie der Riemann'schen Zetafunktion im kritischen Streifen}, Acta Math. \textbf{40} (1915), 67--100.

\bibitem{BohrCourant} H. Bohr, R. Courant, \textit{Neue Anwendungen der Theorie der Diophantischen Approximationen auf die Riemannschen Zetafunktion}, J. reine Angew. Math. \textbf{144} (1914), 249--274.

\bibitem{BohrJessen} H. Bohr, B. Jessen, \textit{\"Uber die Werteverteilung der Riemannschen Zetafunktion}, Erste Mitteilung, Acta Math. \textbf{54} (1930), 1--35; Zweite Mitteilung, ibid. \textbf{58} (1932), 1--55.
	
\bibitem{chen} Y.-G. Chen, \textit{The best quantitative Kronecker's theorem}, J. Lond. Math. Soc. (2) \textbf{61} (2000), no. 3, 691--705.

\bibitem{Dickson} D.G. Dickson, \textit{Zeros of exponential sums}, Proc. Amer. Math. Soc. \textbf{16} (1965), no. 1, 84--89.

\bibitem {GottschalkHedlund} W.H. Gottschalk, G.A. Hedlund, M. Kulas, \textit{Recursive properties of topological transformation groups}, Bull. Amer. Math. Soc. \textbf{52} (1946), 488-489.

\bibitem{MV} H.L. Montgomery and R.C. Vaughan, \textit{Hilbert's inequality}, J. London Math. Soc. (2) \textbf{8} (1974), 73--82.

\bibitem{N09} T. Nakamura, \textit{The joint universality and the generalized strong recurrence for Dirichlet $L$-functions}, Acta Arith. {\bf 138} (2009), 357--362.

\bibitem{NP} {T. Nakamura, \L. Pa\'nkowski}, \textit{Erratum to: The generalized strong recurrence for non-zero rationals parameters}, Arch. Math. \textbf{99} (2012), 43--47.

\bibitem{PanWuerzburg} \L. Pa\'{n}kowski, \textit{Some remarks on the generalized strong recurrence for L-functions}, in: New Directions in Value Distribution Theory of Zeta and L-Functions, in: Ber. Math., Shaker Verlag, Aachen, 2009, pp.305--315.

\bibitem{P2019} \L. Pa\'{n}kowski, \textit{Joint universality and generalized strong recurrence for the Riemann zeta function with rational parameter}, J. Number Theory \textbf{163} (2016), 61--74.
 
\bibitem{Pe} D.V. Pecherskii, \textit{On rearrangements of terms in functional series}, Soviet Math. Dokl. \textbf{14} (1973), 633--636.

\bibitem{P09} {\L. Pa\'nkowski}, {\it Some remarks on the generalized strong recurrence for $L$-functions},
in: New Directions in Value Distribution Theory of zeta and L-Functions, Ber.Math., Shaker Verlag, Aachen,(2009), 305--315.

\bibitem{Tsang} K.M. Tsang, \textit{The distribution of the values of the Riemann zeta-function}, Ph.D. thesis. Princeton University, Princeton (1984).

\bibitem{V} S.M. Voronin, \textit{Theorem on the universality of the Riemann zeta function}, Izv. Akad. Nauk SSSR Ser. Mat. \textbf{39} (1975), 475--486 (in Russian); Math. USSR Izv. \textbf{9} (1975), 443--453.

\end{thebibliography}
\end{document}